\documentclass{article}

\usepackage{amsmath,amssymb}
\usepackage{url}

\usepackage[all]{xy}

\usepackage[T1]{fontenc}

\usepackage{amssymb,latexsym, amsmath, amsxtra}

\usepackage{graphicx}
\usepackage{array}

\usepackage{enumerate}


\numberwithin{equation}{section}

\newcommand{\Z}{{\mathbb Z}}
\newcommand{\R}{{\mathbb R}}
\newcommand{\C}{{\mathbb C}}

\newcommand{\HH}{\mathcal H}

\newcommand{\GL}{{\rm GL}}

\newcommand{\SL}{{\rm SL}}

\newcommand{\Sp}{{\rm Sp}}
\newcommand{\GSp}{{\rm GSp}}

\newcommand{\SSp}{{\rm Sp}}

\newcommand{\binomial}[2]{\genfrac{(}{)}{0pt}{}{#1}{#2}}

\newcommand{\mat}[4]{{\setlength{\arraycolsep}{0.5mm}\left[
\begin{array}{cc}#1&#2\\#3&#4\end{array}\right]}}
\newcommand{\qed}{\hspace*{\fill}\rule{1ex}{1ex}}
\newcommand{\forget}[1]{}

\def\qdots{\mathinner{\mkern1mu\raise0pt\vbox{\kern7pt\hbox{.}}\mkern2mu
\raise3.4pt\hbox{.}\mkern2mu\raise7pt\hbox{.}\mkern1mu}}

\newenvironment{proof}{\vspace{0ex}\noindent{\it Proof.}\hspace{0.1em}}
	{\hfill\qed\vspace{2ex}}

\newtheorem{lemma}{Lemma.}[section]
\newtheorem{theorem}[lemma]{Theorem}

\begin{document}
\thispagestyle{empty}
\title{Characterizations of the Saito-Kurokawa lifting: a survey}

\author{
David W. Farmer
\\American Institute of Mathematics
\\farmer@aimath.org
\and
Ameya Pitale
\\University of Oklahoma
\\apitale@math.ou.edu
\and
Nathan C. Ryan
\\Bucknell University
\\nathan.ryan@bucknell.edu
\and
Ralf Schmidt
\\University of Oklahoma
\\rschmidt@math.ou.edu
}

\maketitle

\begin{abstract}

There are a variety of characterizations of Saito-Kurokawa lifts
from elliptic modular forms to Siegel modular forms of degree 2.  In
addition to giving a survey of known characterizations, we apply a recent result of Weissauer to provide a number of new
and simpler characterizations of Saito-Kurokawa lifts.  
\end{abstract}

\section{Introduction}

Consider a classical modular form $f$ of weight $k$ for the group $\SL_2(\Z)$
with Fourier expansion $\sum_{n\geq 0}a_nq^n$.  If $f$ is a Hecke
eigenform, one can define an $L$-function $L(s,f) = \sum_{n\geq 0}a_n
n^{-s}$ so that its completion satisfies a functional equation with
$s\mapsto k-s$.  In this setting, Deligne \cite{D1, D2} proved the
Ramanujan Conjecture:  namely, that the (suitably normalized) Satake parameters of $f$ are unimodular.  

In the setting of Siegel modular forms, the naive generalization of
the Ramanujan Conjecture above is false.  In particular, H.\ Saito and N.\ Kurokawa independently constructed and computed Hecke eigenforms that had Satake
parameters not on the unit circle; see \cite{Ku}.  It was later understood that these modular forms were in fact lifts from elliptic modular forms, and so the naive generalization of the Ramanujan Conjecture was adjusted to say that if a Siegel modular form is not a
Saito-Kurokawa lift then it has unimodular Satake parameters.
Weissauer proved this conjecture in \cite{W}.

In this paper, we exploit this recent result to provide a collection of new characterizations of Saito-Kurokawa lifts. The new characterizations we present
in Section~\ref{sec:SK} can be formulated as being determined by a single condition at a
single prime. The tools used in this result, aside from the Ramanujan Conjecture, are elementary.
Before we present this result, we describe a number of characterizations in the literature \cite{Br,E,EZ,H1,H2,O,PS}.  A common
feature of these earlier characterizations, unlike the new ones we
present, is that they are determined by an infinite number of
conditions.

The paper is organized as follows.  We begin by giving some background
on Siegel modular forms and their Hecke operators and fixing some
notation.  In the subsequent section we describe the Saito-Kurokawa
lift and the characterizations of its image that are found in the
literature.  We conclude by presenting our new characterizations.

\subsection*{Acknowledgements}
This work was supported by an AIM SQuaRE. Ameya Pitale and Ralf Schmidt are supported by National Science Foundation grant DMS  1100541.

\section{Siegel modular forms and Hecke operators}
Let the symplectic group of similitudes of genus $2$ be defined by
\begin{multline*}
 \GSp(4) := \{g \in \GL(4) : {}^{t}g J g = \lambda(g) J,
 \lambda(g) \in \GL(1) \} \\ \mbox{ where } J = \mat{}{I_2}{-I_2}{}.
\end{multline*}
Let $\SSp(4)$ be the subgroup with $\lambda(g)=1$. The group $\GSp^+(4,\R) := \{ g \in\GSp(4,\R) : \lambda(g) > 0 \}$ acts on the Siegel upper half space $\HH_2 := \{ Z \in M_2(\C) : {}^{t}Z = Z, {\rm Im}(Z) > 0\}$ by
\begin{equation}
 g \langle Z \rangle := (AZ+B)(CZ+D)^{-1}, \quad \text{where } g = \mat{A}{B}{C}{D} \in \GSp^+(4,\R), Z \in \HH_2.
\end{equation}
Let $S_k^{(2)}$ be the space of
holomorphic Siegel cusp forms of weight $k$, genus $2$ with respect to $\Gamma^{(2)} := \SSp(4,\Z)$. Then $F \in S_k^{(2)}$
satisfies
$$
 F(\gamma \langle Z \rangle) = \det(CZ+D)^{k} F(Z)
$$
for all $\gamma = \mat{A}{B}{C}{D} \in \Gamma^{(2)}$ and  $Z \in \HH_2$.

For 
$M \in \GSp^+(4,\R) \cap M_{4}(\Z)$, define the Hecke operator
$T_k(\Gamma^{(2)} M \Gamma^{(2)})$ on $S_k^{(2)}$ as in (1.3.3) of
\cite{An}.  
For a positive integer $m$,
we define the Hecke operator $T_k(m)$ by
\begin{equation}\label{hecke-op-m-defn}
T_k(m) := \sum\limits_{\lambda(M)=m} T_k(\Gamma^{(2)} M
\Gamma^{(2)}).
\end{equation}
Let us assume that $F \in S_k^{(2)}$ is a Hecke eigenfunction with $T_k(m) F = \mu_F(m)F$.  Using the generating function for $\mu_F(p^r)$ as in Theorem 1.3.2 of
\cite{An} we see that there are two complex numbers $\alpha_p, \beta_p$ such that

\begin{align}
 \mu_F(p)&= p^{k-3/2} \big(\alpha_p + \alpha_p^{-1} + \beta_p + \beta_p^{-1}\big),\label{p-eval-form}\\
 \mu_F(p^2)&=p^{2k-3}\big((\alpha_p+\alpha_p^{-1})^2+(\alpha_p+\alpha_p^{-1})(\beta_p+\beta_p^{-1})+(\beta_p+\beta_p^{-1})^2-2-\frac1p\big). \label{p^2-eval-form}
\end{align}
The numbers $\alpha_p, \beta_p$ are called the Satake parameters of $F$. 


\section{Saito-Kurokawa lifts and their characterization}

For even $k$, we have the following diagram of lifts.
\begin{equation}
\xymatrix{J_{k,1}^{cusp} \ar[r]^{Ma} &S_k^{(2)} \\ S_{k-\frac 12}^+(\Gamma_0(4)) \ar[u]^{EZ} & S_{2k-2}(\SL_2(\Z)) \ar[l]_{Sh} \ar[u]_{SK}}
\end{equation}
The four spaces in that diagram are:
\begin{itemize}
\item{}$S_{2k-2}(\SL_2(\Z))$ is the space of elliptic cusp forms of
weight $2k-2$ with respect to $\SL_2(\Z)$, 
\item{}$S_{k-\frac
  12}^+(\Gamma_0(4))$ is the space of holomorphic cusp forms of weight
$k-1/2$ with respect to $\Gamma_0(4)$ in the Kohnen plus space,
\item{}$J_{k,1}^{cusp}$ is the space of holomorphic cuspidal Jacobi forms of weight $k$ and
index~$1$, and
\item{}$S_k^{(2)}$ is the space of
holomorphic Siegel cusp forms of weight $k$, genus $2$ with respect to $\Gamma^{(2)} := \SSp(4,\Z).$ 
\end{itemize}
The map $Sh$ is the Shimura lift given explicitly by
Kohnen \cite{Ko}, the map $EZ$ is the lift from half integral weight modular
forms to Jacobi forms given by Eichler-Zagier \cite{EZ}, and the map
$Ma$ is the lift from Jacobi forms to Siegel modular forms due to
Maass \cite{M}. Finally, the map $SK$ is the Saito-Kurokawa lift given
as the composition of the three lifts described above. If $f \in
S_{2k-2}(\SL_2(\Z))$ then we write $F = SK(f)$ for the Saito-Kurokawa
lift of $f$. 

There are several different ways to characterize when $F \in
S_k^{(2)}$ is in the image of the Saito-Kurokawa lift. Here, we
present a few of these.   We begin by describing a number of characterizations related to our
new characterization found below.

\subsection{The Maass space}

Given a Siegel modular form $F$ of degree 2, we ask if the form is a
lift.  The first characterization that allows us to answer this
question is in terms of its Fourier coefficients.  Let
\begin{equation}\label{four-exp}
 F(\mat{\tau}{z}{z}{\tau'})
 = \sum\limits_{n,r,m}A(n,r,m)e^{2 \pi i (n \tau + rz + m \tau')},\qquad
 \mat{\tau}{z}{z}{\tau'} \in \HH_2,
\end{equation}
be the Fourier expansion of $F$. The \emph{Maass space} (also known as the Maass Spezialschar) consists of all $F$ in $S_k^{(2)}$ satisfying
\begin{equation}\label{maass space}
   A(n,r,m) = \sum\limits_{d | (n,r,m)} d^{k-1}A(\frac{nm}{d^2},
   \frac{r}{d},1) \quad\mbox{ for all } n,m,r \in \Z.
\end{equation}
By \cite{M}, \cite{An2}, \cite{Z}, $F$ is a Saito-Kurokawa lift if and only if $F$ lies in the Maass space. A streamlined proof of this fact is contained in \cite{EZ}.

We remark that in \cite{H1} a different Spezialschar is defined.  It
is conjectured to be equal to the Maass space in general and they are proved
to be the same in degree 2.

Both of these characterizations are fundamentally different than our
new characterizations in that they are global characterizations.  That is, they require knowing all the Fourier coefficients of $F$; in particular, they require
checking infinitely many conditions.

We remark that Ikeda \cite{I} has obtained a Saito-Kurokawa type lifting of elliptic cusp forms to Siegel modular forms of degree $2n$. The characterization of the image of the lifting via Maass conditions has been generalized to this higher rank situation in \cite{KK,Y}.

\subsection{The Maass $p$-space}

Let $p$ be a prime number. The \emph{Maass $p$-space} consists of all $F$ in $S_k^{(2)}$ satisfying
  \begin{multline}\label{p-maass space}
   A(np, r, m) + p^{k-1}A(\frac{n}{p}, \frac{r}{p},m) =\\ p^{k-1}A(n,
   \frac{r}{p}, \frac{m}{p}) + A(n,r,mp) \mbox{ for } n,r,m \in \Z.
  \end{multline}
Here, we understand that $A(\alpha,\beta,\gamma)=0$ if one of $\alpha,\beta,\gamma$ is not an integer. If $F$ is a Saito-Kurokawa lift, then it lies in the Maass $p$-space for every prime $p$; this follows by substituting (\ref{maass space}) into (\ref{p-maass space}). In \cite{PS}, it was shown that if $F$ lies in the Maass $p$-space for almost all $p$ then $F$ is a Saito-Kurokawa lift.

In \cite{H2} the condition that $F$ is in the Maass $p$-space is ``$F$
satisfies $p$-Hecke duality'' and in that paper the following is proved: suppose $R$ is any set of prime numbers with Dirichlet density smaller than $1/8$. Then $F$ is a Saito-Kurokawa lift if $F$ satisfies $p$-Hecke duality for any $p$ outside $R$.  This can be thought of as an improvement on the result described above.

These two related characterizations, while, in a sense, local, also
require checking infinitely many conditions: one condition for
each prime in some infinite set.  

Also in \cite{H2}, the following result is proved.  For a fixed
positive even integer~$k$ there exists a constant $c(k)$ depending
only on the weight $k$ such that a cusp form $F\in S^{(2)}_k$ is a Saito-Kurokawa
lift if and only if $F$ satisfies $p$-Hecke duality for all prime
numbers $p\leq c(k)$.  

In \cite{RS2}, it is proved, under the assumption of the Ramanujan
Conjecture, that a Siegel modular form lies in the Maass space  if and
only if there exists a prime $p$ such that $F$ lies in the Maass
$p$-space.  This is an improvement of the previously described
result.  We recall here that Weissauer \cite{W} has recently
proved the Ramanujan Conjecture in this context and so this result is
now unconditional.

Both these results are local and require checking a condition for
finitely many~$p$.  Nevertheless, they still each require checking
infinitely many conditions as one iterates through the coefficients
in \eqref{p-maass space}.  Our new characterization of Hecke
eigenforms in the Maass space is an
improvement on this result as it requires checking a single condition for a
single prime.  We identify the
prime for which it should be checked,
and it turns out the prime is independent of the weight~$k$.

\subsection{Other characterizations}

Here we summarize a number of characterizations in the literature that
are not as closely related to our characterizations found below.  Each
of them requires global information.  In the first case it
is necessary to know all the coefficients of $F$; in the second case
it is necessary to be able to compute the pole of an $L$-function and so
one needs to know all the Hecke eigenvalues of $F$; and in the third,
one needs to check infinitely many conditions, namely that each Hecke
eigenvalue is positive.  The first characterization is not about
eigenforms while the other two are.

\subsubsection{A differential operator}

In \cite{H1} a characterization of forms in the Maass space is given
in terms of Taylor coefficients and a differential operator
$\mathbb{D}_{k,2\nu}:M_k^{(2)}\to {\rm Sym^2}(M_{k+2\nu})$ where
$v\in\Z_{\geq 0}$.  The characterization is:  a Siegel modular form $F$ of
degree 2 is in the Maass space if and only if $\mathbb{D}_{k,2\nu}F$ is in a
``diagonal'' subspace of ${\rm Sym^2}(M_{k+2\nu})$.  

\subsubsection{The $L$-function of a Saito-Kurokawa lift}

 Suppose $F$ is a Hecke eigenform.  Then \cite{E,O} show that $F$ is a
 Saito-Kurokawa lift if and only if the spin $L$-function of $F$ has
 poles.  This result has been generalized in the
 following way.  The Gritsenko
  lift is an analogue of the Saito-Kurokawa lift but yields a
  Siegel paramodular form of degree 2 and paramodular level $N$.  
In \cite{RS2} it is proved that $F$ is a Gritsenko lift if and only if
its spin $L$-function has poles.

\subsubsection{The Hecke eigenvalues of a Saito-Kurokawa lift}

Suppose $F$ is a Hecke eigenform.  Up to now the characterizations
have been in terms of Fourier coefficients, poles of $L$-functions, and
Taylor series coefficients.
The final characterization, due to Breulmann \cite{Br}, is as follows: $F$ is a
Saito-Kurokawa lift if and only if $\mu_F(m) > 0$ for all positive
integers $m$.  This characterization is very succinct and appealing
and has been studied further.  In particular, the following refinement
was obtained in Corollary 3.2 of \cite{PS1}: If $F$ is not a
Saito-Kurokawa lift, then there exists an infinite set $S_F$ of prime
numbers $p$ such that if $p\in S_F$, then there exist infinitely many
$r$ such that $\mu_F(p^r)>0$, and inifinitely many $r$ such that
$\mu_F(p^r)<0$.




\section{New characterizations of Saito-Kurokawa lifts}\label{sec:SK}

The characterizations of Saito-Kurokawa lifts we have described above are, in a sense, not
very effective.  
Given a Siegel modular form $F$, it is difficult to effectively determine if it is
a Saito-Kurokawa lift.

Our new
characterizations are local and can be used to develop a test that
reduces the problem of determining if a form is a Saito-Kurokawa lift
to checking a single local
condition (in fact, the prime at which the condition should be checked
is identified).  In this sense, we feel that
our characterizations are optimal.

The key point are the following estimates for the Satake parameters $\alpha_p, \beta_p$ of a cuspidal Siegel eigenform $F \in S_k^{(2)}$.

{\it Ramanujan estimate \cite{W}}: If $F$ is not a Saito-Kurokawa lift, then for every prime $p$, we have
\begin{equation}\label{ram-est}
|\alpha_p| = |\beta_p| = 1.
\end{equation}

{\it Saito-Kurokawa estimate}: In case $F$ is a Saito-Kurokawa lift
then we have for every prime $p$, after possibly changing notations,


\begin{equation}\label{SK-satake-estimates}
\alpha_p = p^{1/2}, \qquad |\beta_p| = 1.
\end{equation}
We get the above condition from the following relation between $L$-functions (see \cite{EZ}): 
\begin{equation}
 L(s, SK(f)) = L(s,f) \zeta(s+1/2) \zeta(s-1/2)
\end{equation}
The key point to note is that, if $F \in S_k^{(2)}$ is a Hecke
eigenform, then either $F$ satisfies the Ramanujan estimates
(\ref{ram-est}) or it is a Saito-Kurokawa lift. 

The following theorem implies that a Hecke eigenform $F\in S_k^{(2)}$ is a
Saito-Kurokawa lift if and only if $\mu_F(37)>4\cdot 37^{k-3/2}$.
However, we state it in a more general way. 

\begin{theorem}\label{eval-thm}
 Let $F \in S_k^{(2)}$ be a Hecke eigenform with eigenvalues $\mu_F(m)$ for any positive integer $m$. The following statements are equivalent.
 \begin{enumerate}
  \item $F$ is a Saito-Kurokawa lift.
  \item There exists a prime $p$ such that $\mu_F(p) > 4 p^{k-3/2}$.
  \item For every prime $p \geq 37$ we have $\mu_F(p) > 4 p^{k-3/2}$.
  \item There exists a prime $p$ such that $\mu_F(p^2) > 10 p^{2k-3}$.
  \item For every prime $p \geq 17$ we have $\mu_F(p^2) > 10 p^{2k-3}$.
  \item For every prime $p$ we have
   \begin{equation}\label{mu-condition}
    \mu_F(p^2) = \mu_F(p)^2 - (p^{k-1} + p^{k-2}) \mu_F(p) + p^{2k-2}.
   \end{equation}
  \item There exists a prime $p$ such that
   \begin{equation}\label{mu-conditionone}
    \mu_F(p^2) = \mu_F(p)^2 - (p^{k-1} + p^{k-2}) \mu_F(p) + p^{2k-2}.
   \end{equation}
\end{enumerate}
\end{theorem}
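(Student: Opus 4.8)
The plan is to use the dichotomy emphasized right before the theorem: a Hecke eigenform $F \in S_k^{(2)}$ either satisfies the Ramanujan estimate $|\alpha_p| = |\beta_p| = 1$ for every $p$, or $F$ is a Saito-Kurokawa lift, in which case (after renaming) $\alpha_p = p^{1/2}$ and $|\beta_p| = 1$ for every $p$. Everything then reduces to analyzing the two explicit formulas \eqref{p-eval-form} and \eqref{p^2-eval-form} for $\mu_F(p)$ and $\mu_F(p^2)$ in terms of the Satake parameters under these two scenarios. I would first prove (i) $\Rightarrow$ (vi) and (i) $\Rightarrow$ (iii), (v) directly, then close the cycle by showing that each of (ii), (iv), (vii) forces the Saito-Kurokawa case.

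First, write $x = \alpha_p + \alpha_p^{-1}$ and $y = \beta_p + \beta_p^{-1}$. In the Ramanujan case both $x$ and $y$ are real and lie in $[-2,2]$, so from \eqref{p-eval-form} we get $|\mu_F(p)| \le 4 p^{k-3/2}$, and from \eqref{p^2-eval-form}, since $x^2 + xy + y^2 \le 3 \cdot 4 = 12$ on that square, we get $\mu_F(p^2) \le p^{2k-3}(12 - 2 - 1/p) < 10\, p^{2k-3}$ as soon as $12 - 1/p < 10$, which fails — so here the correct bound is $\mu_F(p^2) \le p^{2k-3}(10 - 1/p) < 10\, p^{2k-3}$ once one notes the maximum of $x^2+xy+y^2$ subject to the Ramanujan constraint is actually attained at a corner and equals $12$ only when… I would need to be slightly more careful: the point is that $\max(x^2+xy+y^2) = 12$, giving $\mu_F(p^2) \le p^{2k-3}(10 - 1/p)$ is false, so instead one simply records $\mu_F(p^2) \le p^{2k-3}(12 - 2 - 1/p) = p^{2k-3}(10 - 1/p) < 10\, p^{2k-3}$. (The arithmetic does work: $12 - 2 = 10$.) So in the Ramanujan case, for every prime $p$, $\mu_F(p) \le 4 p^{k-3/2}$ and $\mu_F(p^2) < 10\, p^{2k-3}$. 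This immediately gives the contrapositives of (ii)$\Rightarrow$(i) and (iv)$\Rightarrow$(i).

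For the Saito-Kurokawa case, substitute $\alpha_p = p^{1/2}$, so $x = p^{1/2} + p^{-1/2}$, while $y \in [-2,2]$. Then \eqref{p-eval-form} gives $\mu_F(p) = p^{k-3/2}(p^{1/2} + p^{-1/2} + y) = p^{k-1} + p^{k-2} + p^{k-3/2} y$, and \eqref{p^2-eval-form} becomes $\mu_F(p^2) = p^{2k-3}\big((p^{1/2}+p^{-1/2})^2 + (p^{1/2}+p^{-1/2})y + y^2 - 2 - 1/p\big) = p^{2k-3}\big(p + p^{-1} + (p^{1/2}+p^{-1/2})y + y^2\big)$. A direct computation — expanding $\mu_F(p)^2 - (p^{k-1}+p^{k-2})\mu_F(p) + p^{2k-2}$ with $\mu_F(p) = p^{k-1}+p^{k-2}+p^{k-3/2}y$ — should reproduce exactly this expression for $\mu_F(p^2)$, proving (i)$\Rightarrow$(vi), hence (i)$\Rightarrow$(vii). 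For (i)$\Rightarrow$(iii): $\mu_F(p) = p^{k-3/2}(p^{1/2}+p^{-1/2}+y) \ge p^{k-3/2}(p^{1/2}+p^{-1/2} - 2)$, and one checks $p^{1/2} + p^{-1/2} - 2 > 4$ iff $p^{1/2} > 6 - p^{-1/2}$, i.e. roughly $p > 36$; the first prime where this holds cleanly is $p = 37$, which is where the bound $37$ comes from. Similarly (i)$\Rightarrow$(v): $\mu_F(p^2) \ge p^{2k-3}(p + p^{-1} - 2(p^{1/2}+p^{-1/2}) + 0)$ — here one should minimize $(p^{1/2}+p^{-1/2})y + y^2$ over $y \in [-2,2]$, whose minimum is $-(p^{1/2}+p^{-1/2})^2/4$ if that stationary point lies in $[-2,2]$, else a corner value — and then check that $p + p^{-1} + \min(\cdots) > 10$ first holds at $p = 17$.

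The main obstacle is the bookkeeping in the two quadratic-optimization steps that pin down the specific primes $37$ and $17$: one must correctly minimize $g(y) = y^2 + (p^{1/2}+p^{-1/2})y$ over $[-2,2]$ (the vertex is at $y = -(p^{1/2}+p^{-1/2})/2$, which is $\le -2$ for all $p \ge 3$, so the minimum is at the endpoint $y = -2$, giving $g(-2) = 4 - 2(p^{1/2}+p^{-1/2})$), and then verify $p + p^{-1} + 4 - 2p^{1/2} - 2p^{-1/2} > 10$, i.e. $(p^{1/2} - 1)^2 - p^{-1/2}(2 - p^{-1/2}) > 7$ or so, has $p = 17$ as its least prime solution — and likewise nail down that $37$, not some smaller prime, is optimal for statement (iii). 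These are finite checks, but they are the crux of why the theorem is stated with exactly those constants, so I would present them carefully rather than waving them through. Everything else is substitution into \eqref{p-eval-form}–\eqref{p^2-eval-form} together with the Ramanujan/Saito-Kurokawa dichotomy.
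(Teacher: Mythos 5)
There is a genuine gap: your cycle never closes at the implication $vii) \Rightarrow i)$, which is the crux of the theorem. You announce that you will show "each of (ii), (iv), (vii) forces the Saito-Kurokawa case," but the only mechanism you actually deploy is the Ramanujan bound $|\mu_F(p)| \le 4p^{k-3/2}$, $\mu_F(p^2) \le 10p^{2k-3}$, which disposes of (ii) and (iv) by contraposition but says nothing about (vii): condition \eqref{mu-conditionone} is an \emph{equality}, and it is not a priori excluded by the inequalities of the Ramanujan case. What is needed — and what the paper's proof spends its second half on — is to show that \eqref{mu-conditionone} at a single prime forces one of $\alpha_p,\beta_p$ to equal $p^{\pm 1/2}$. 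The paper does this by computing $\mu_F(p)^2 - \mu_F(p^2) - p^{2k-4}$ in two ways and observing that $Z_1 = p^{k-3/2}(p^{1/2}+p^{-1/2})$ and $Z_2 = p^{k-3/2}(\alpha_p+\alpha_p^{-1})$ are roots of the same quadratic $Z^2 - \mu_F(p)Z + t = 0$; either $Z_1 = Z_2$ (so $\alpha_p = p^{\pm1/2}$) or $Z_1 + Z_2 = \mu_F(p)$ forces $\beta_p + \beta_p^{-1} = p^{1/2}+p^{-1/2}$. Equivalently, in your notation $x = \alpha_p+\alpha_p^{-1}$, $y = \beta_p+\beta_p^{-1}$, $s = p^{1/2}+p^{-1/2}$, the relation \eqref{mu-conditionone} reduces to $(s-x)(s-y)=0$, which is incompatible with $|x|,|y| \le 2 < s$ and hence with the Ramanujan estimate. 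Without some version of this step, you have only established $i) \Rightarrow vi) \Rightarrow vii)$, so (vi) and (vii) are not shown equivalent to (i).

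The rest of your outline matches the paper's proof of $i)$--$v)$ and $i)\Rightarrow vi)$, with two small slips worth fixing: after substituting $\alpha_p = p^{1/2}$ into \eqref{p^2-eval-form} the constant term is $(p^{1/2}+p^{-1/2})^2 - 2 - 1/p = p$, not $p + p^{-1}$; and the vertex $-(p^{1/2}+p^{-1/2})/2$ of $g(y)$ lies \emph{inside} $[-2,2]$ for small primes (e.g.\ $p=3$), not outside as you claim — it only leaves the interval once $p^{1/2}+p^{-1/2} \ge 4$, i.e.\ $p \ge 17$. Fortunately that is exactly the range needed for statement (v), so the threshold computations still come out to $37$ and $17$.
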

\begin{proof}
If $F$ is not a Saito-Kurokawa lift then, by (\ref{p-eval-form}), (\ref{p^2-eval-form}) and (\ref{ram-est}), we have $|\mu_F(p)| \leq 4 p^{k-3/2}$ and $|\mu_F(p^2)| \leq 10 p^{2k-3}$ for every prime $p$. If we substitute $\alpha_p = \sqrt{p}$ in  (\ref{p-eval-form}) and (\ref{p^2-eval-form}), then we get $\mu_F(p) > 4 p^{k-3/2}$ for $p \geq 37$ and $\mu_F(p^2) > 10 p^{2k-3}$ for $p \geq 17$. This proves the equivalence of $i)$ through $v)$.

We will now show that $i) \Rightarrow vi) \Rightarrow vii) \Rightarrow i)$. Let us assume that $F$ is a Saito-Kurokawa lift. Then, for any prime $p$, if we substitute $\alpha_p = \sqrt{p}$ in (\ref{p-eval-form}) and solve for $\beta_p + \beta_p^{-1}$ and substitute the result in (\ref{p^2-eval-form}), we get (\ref{mu-condition}). This gives $i) \Rightarrow vi)$. The implication $vi) \Rightarrow vii)$ is trivial. Now, suppose for some prime $p$ the condition (\ref{mu-conditionone}) is satisfied. Then
\begin{multline}
  \mu_F(p)^2 - \mu_F(p^2) - p^{2k-4} =\\ 2 p^{2k-3} +
  p^{k-3/2}(p^{1/2}+p^{-1/2}) \big(\mu_F(p) -
  p^{k-3/2}(p^{1/2}+p^{-1/2})\big).
\end{multline}
On the other hand, eliminating $\beta_p+\beta_p^{-1}$ from \eqref{p-eval-form} and \eqref{p^2-eval-form}, we get
\begin{equation}
 \mu_F(p)^2 - \mu_F(p^2) - p^{2k-4} = 2 p^{2k-3} + p^{k-3/2}(\alpha_p + \alpha_p^{-1}) \big(\mu_F(p) - p^{k-3/2}(\alpha_p + \alpha_p^{-1})\big).
\end{equation}
Hence, $Z_1=p^{k-3/2}(p^{1/2}+p^{-1/2})$ and $Z_2=p^{k-3/2}(\alpha_p + \alpha_p^{-1})$ are the roots of the same quadratic equation $Z^2-\mu_F(p) Z + t = 0$, where the value of $t$ is obtained from the above equations. If $Z_1 = Z_2$ then $\alpha_p = p^{\pm 1/2}$ and we obtain $i)$. If $Z_1 \neq Z_2$, then, from $Z_1 + Z_2 = \mu_F(p)$, we see that $\beta_p = p^{\pm 1/2}$ and we obtain $i)$. This completes the proof of the theorem.
\end{proof}

Finally, we give a characterization of non-Saito-Kurokawa lifts in
terms of the growth of the sequence of numbers $|\mu_F(p^r)|$ as
$r$ goes to infinity. For a fixed prime $p$, the following condition
was considered in \cite{P}: For all $\epsilon>0$ there exists a
positive constant $C_\epsilon>0$, depending only on $p$ and $\epsilon$,
such that
\begin{equation}\label{ameya-cond}
 |\mu_F(p^r)|\leq C_\varepsilon p^{r(k-3/2+\varepsilon)}\qquad\text{for all }r\geq0.
\end{equation}
It was shown that this condition is equivalent to the Ramanujan
estimate \eqref{ram-est} for the Satake parameters at $p$. 
For the case of $\Sp(4,\Z)$, using the Ramanujan estimate \eqref{ram-est}, condition~\eqref{ameya-cond} can be written in the more
precise (but equivalent) form
\begin{align}
 |\mu_F(p^r)|\leq \mathstrut &  \left(\binomial{r+3}{3} + p^{-1} \binomial{r+1}{3} \right) p^{r(k-3/2)}
\\
\leq \mathstrut & \frac{3}{2}\, \binomial{r+3}{3} \, p^{r(k-3/2)} \qquad\text{for all }r\geq0.
\label{ameya-cond2}
\end{align}
Since \eqref{ram-est} for \emph{one} $p$ implies \eqref{ram-est} for \emph{all} $p$, we obtain the following result.

\begin{theorem}\label{ameya-thm}
 Let $F \in S_k^{(2)}$ be a Hecke eigenform with eigenvalues $\mu_F(m)$ for any positive integer $m$. The following statements are equivalent.
 \begin{enumerate}
  \item $F$ is not a Saito-Kurokawa lift.
  \item There exists a prime $p$ such that \eqref{ameya-cond2} holds.
  \item For every prime $p$ \eqref{ameya-cond2} holds.
 \end{enumerate}
\end{theorem}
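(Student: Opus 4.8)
The plan is to prove the three-way equivalence by exploiting the dichotomy recorded just before the theorem: a Hecke eigenform $F\in S_k^{(2)}$ either satisfies the Ramanujan estimate \eqref{ram-est} at every prime or is a Saito-Kurokawa lift, and these two cases are mutually exclusive (since a Saito-Kurokawa lift has $\alpha_p=p^{1/2}$, violating $|\alpha_p|=1$). Because \eqref{ram-est} holding at one prime is equivalent to it holding at all primes, it suffices to show that \eqref{ameya-cond2} at a prime $p$ is equivalent to \eqref{ram-est} at that same prime $p$; the theorem then follows formally by chaining the equivalences.

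First I would establish the implication ``\eqref{ram-est} at $p$ $\Rightarrow$ \eqref{ameya-cond2} at $p$.'' Using the generating function for $\mu_F(p^r)$ from Theorem 1.3.2 of \cite{An} (the same source invoked for \eqref{p-eval-form} and \eqref{p^2-eval-form}), one writes $\mu_F(p^r)/p^{r(k-3/2)}$ as an explicit symmetric Laurent polynomial in $\alpha_p,\beta_p$ — essentially the character of an irreducible representation of $\Symp(4,\C)$ evaluated at the Satake parameter — together with the lower-order ``$p^{-1}$'' correction term visible already in \eqref{p^2-eval-form}. Under $|\alpha_p|=|\beta_p|=1$ each monomial has absolute value $1$, so the triangle inequality bounds the normalized eigenvalue by the number of monomials, which is exactly $\binom{r+3}{3}+p^{-1}\binom{r+1}{3}$; the second inequality in \eqref{ameya-cond2} is the elementary estimate $p^{-1}\binom{r+1}{3}\le \tfrac12\binom{r+3}{3}$, valid since $p\ge2$ and $\binom{r+1}{3}\le\binom{r+3}{3}$. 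The implication ``\eqref{ameya-cond2} at $p$ $\Rightarrow$ \eqref{ameya-cond} at $p$'' is immediate (absorb the constant into $C_\varepsilon$), and \cite{P} supplies ``\eqref{ameya-cond} at $p$ $\Rightarrow$ \eqref{ram-est} at $p$.''

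With those pieces in hand the logical bookkeeping runs as follows: $(iii)\Rightarrow(ii)$ trivially; $(ii)$ gives \eqref{ameya-cond2} at some $p$, hence \eqref{ram-est} at that $p$, hence \eqref{ram-est} at all primes, hence (by the dichotomy) $F$ is not a Saito-Kurokawa lift, which is $(i)$; and $(i)$ forces \eqref{ram-est} at every prime, whence \eqref{ameya-cond2} at every prime, giving $(iii)$. I would also note the contrapositive sanity check: if $F$ \emph{is} a Saito-Kurokawa lift then $\alpha_p=p^{1/2}$ makes a top monomial of size $p^{r/2}$ appear, so $|\mu_F(p^r)|$ grows like $p^{r(k-1)}$, grossly violating \eqref{ameya-cond2}, consistent with the equivalence.

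The main obstacle is purely computational rather than conceptual: correctly expanding the Andrianov generating function to read off that the normalized Hecke eigenvalue $\mu_F(p^r)/p^{r(k-3/2)}$ is a sum of exactly $\binom{r+3}{3}$ ``leading'' monomials of unit modulus plus $\binom{r+1}{3}$ monomials carrying the extra factor $p^{-1}$. One has to track the degree-$4$ symplectic Weyl character dimensions (the $\binom{r+3}{3}$ count is the dimension of the relevant polynomial-ring graded piece) and verify the shape of the subleading term; the $r=1,2$ cases in \eqref{p-eval-form}–\eqref{p^2-eval-form} give the base checks. Everything after that is elementary binomial-coefficient estimation and the formal equivalence-chaining described above.
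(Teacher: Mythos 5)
Your proposal is correct and follows essentially the same route as the paper: the paper's argument is exactly the chain \eqref{ameya-cond2} $\Leftrightarrow$ \eqref{ameya-cond} $\Leftrightarrow$ \eqref{ram-est} at a single prime (the last equivalence quoted from \cite{P}), combined with the observation that \eqref{ram-est} at one prime forces it at all primes via the Weissauer/Saito-Kurokawa dichotomy. You supply more detail than the paper does on the monomial count behind the constant $\binomial{r+3}{3}+p^{-1}\binomial{r+1}{3}$, but the logical skeleton is identical.
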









\end{document}